\documentclass[11pt]{amsart}

\usepackage[english]{babel}
\usepackage{amsmath}
\usepackage{amsfonts}
\usepackage{amsthm}
\usepackage{amssymb}
\usepackage{booktabs}
\usepackage{longtable}
\usepackage{bm}
\usepackage{nicefrac}
\usepackage{graphicx}
\usepackage{hyperref}

\newcommand{\Z}{\mathbb Z}

\newcommand{\R}{\mathbb R}
\newcommand{\Ai}{\mathsf{A}}

\newcommand{\Ho}{\mathsf{Ho}}
\newcommand{\one}{\mathbf{1}}

\DeclareMathOperator{\diag}{diag}

\theoremstyle{plain}
\newtheorem{definition}{Definition}[section]

\newtheorem{lemma}[definition]{Lemma}
\newtheorem{theorem}[definition]{Theorem}

\theoremstyle{definition}

\newtheorem{remark}[definition]{Remark}

\author{Sven Ahrend}
\address{S. Ahrend, Universit\"at Rostock, Institute of Mathematics, 18051 Rostock, Germany}
\email{sven.ahrend@uni-rostock.de}
\author{Mathieu Dutour Sikiri\'c}
\address{M. Dutour Sikiri\'c, MSM Programing d.o.o., Karlovacka Cesta 28B, 10452 Klinca Selo, Croatia}
\email{mathieu.dutour@gmail.com}
\title[A five-dimensional periodic packing-covering record]{A non-lattice periodic point set beating the optimal lattice
       packing-covering constant in dimension five}
\date{September 24, 2026}

\begin{document}

\begin{abstract}
The packing-covering constant of a point set $X\subseteq\R^d$ is
$\gamma(X)=\mu(X)/\rho(X)$, the covering radius divided by the packing radius.
Among \emph{lattices}, its minimum $\gamma_d$ is known for $d\leq5$, attained by
$\Ai_2^*$, $\Ai_3^*$, and Horv\'ath's lattices $\Ho_4$, $\Ho_5$;
B\"or\"oczky proved that $\gamma_3$ is optimal without the lattice restriction,
but for $d=4,5$ the non-lattice problem was open. We exhibit a $2$-periodic
non-lattice point set of $\R^5$ with
\begin{equation*}
\gamma = \frac{9}{\sqrt{40}} = 1.423024\ldots \;<\; \gamma_5=\sqrt{\tfrac32+\tfrac{\sqrt{13}}6}
       = 1.4494568\ldots ,
\end{equation*}
so that in dimension five the packing-covering problem is \emph{not} solved by
lattices. 
\end{abstract}

\maketitle

\section{Introduction}

For a discrete set $X\subseteq\R^d$, let
\begin{equation*}
\rho(X)=\tfrac12\inf_{x\neq y\in X}\|x-y\|,
\qquad
\mu(X)=\sup_{z\in\R^d}\inf_{x\in X}\|z-x\|
\end{equation*}
be the packing and covering radii. The \emph{packing-covering constant}
\begin{equation}\label{eq:gamma}
\gamma(X)=\frac{\mu(X)}{\rho(X)}
\end{equation}
is invariant under scaling and isometry. Minimizing it amounts to seeking a set
that is simultaneously a good packing and a good covering; equivalently, to
finding the thinnest $(r,R)$-system in Ryshkov's terminology or the ``closest
packing'' in L.~Fejes T\'oth's terminology. After normalizing $\rho(X)=1$, the
supremal radius of an additional sphere that can be packed among the unit
spheres centered at $X$ is $\mu(X)-1=\gamma(X)-1$. Among lattice packings, the
minimum of this radius is $\gamma_d-1$.

Restricted to lattices, the problem is solved for $d\leq5$
\cite[Table~3]{SchurmannVallentin}:
\begin{center}
\begin{tabular}{c|c|l}
$d$ & lattice & $\gamma_d$\\\hline
$2$ & $\Ai_2^*$ & $2/\sqrt3\approx1.154700$\\
$3$ & $\Ai_3^*$ & $\sqrt{5/3}\approx1.290994$\\
$4$ & $\Ho_4$ & $\sqrt{2\sqrt3}(\sqrt3-1)=\sqrt{8\sqrt3-12}\approx1.362500$\\
$5$ & $\Ho_5$ & $\sqrt{3/2+\sqrt{13}/6}\approx1.449457$
\end{tabular}
\end{center}
The two-dimensional case is due to Ryshkov, the three-dimensional one to
B\"or\"oczky \cite{Boroczky} --- who proved it \emph{without} the restriction
to lattices, so that $\gamma_3=\sqrt{5/3}$ is optimal among all point sets ---
and the four- and five-dimensional cases to Horv\'ath \cite{Horvath82,Horvath86}. The
lattices $\Ho_4$ and $\Ho_5$ are neither best packing nor best covering
lattices. Sch\"urmann and Vallentin \cite{SchurmannVallentin} reproduced these
results computationally and showed that in dimension $5$ there are between
$47$ and $75$ locally optimal lattices, with $\gamma$ ranging from
$1.449456$ to $1.557564$.

For $d=4,5$, it was open whether a non-lattice set could improve on the
corresponding lattice value.
We show that in dimension $5$ the lattice bound is not optimal: there is a $2$-periodic point
set
\begin{equation*}
X=\bigcup_{k=1}^{m}\,(L+c_k),\qquad c_1=0,\quad m=2,
\end{equation*}
which is not a lattice and satisfies $\gamma(X)<\gamma_5$. This is in contrast
with the packing problem, where Andreanov and Kallus \cite{AndreanovKallus}
enumerated all locally optimal $2$-periodic sphere packings for $d\leq5$ and
found that none beats the best lattice, although in $d=3$ and $d=5$ some
\emph{match} it. 

The paper is structured as follows: Section~\ref{sec:record} contains the main
statement. The claimed packing and covering radii are derived by elementary
arguments in Sections~\ref{sec:packing} and~\ref{sec:covering}, respectively.
Alternatively, the claims may be verified using the software described in
\cite{polyhedral} and the supplementary files. Section~\ref{sec:found}
describes how the configuration was found. Appendix~\ref{app:vertices} lists
the vertices used in Section~\ref{sec:covering}.

\section{A non-lattice set below $\gamma_5$}\label{sec:record}

Let
\[
 L=D_5=\{z\in\Z^5:\textstyle\sum_i z_i\equiv0\pmod2\}
\]
and
\[
 D_5^+= D_5 + \left\{ 0, \frac 12 \one \right\},
\]
where $\one = (1,1,1,1,1)^t$. For convenience, we set $t=\frac 1 2\one$.
Since $t\in D_5^+$ but $2t=\one\notin D_5^+$, this set is not a lattice.
Instead of working with the Euclidean norm, we work with the norm induced by a
positive definite quadratic form $H$. Therefore
\begin{equation*}
\lambda^2(X)=\inf_{x\neq y\in X}(x-y)^t H (x-y),
\qquad
\mu^2(X)=\sup_{z\in\R^d}\inf_{x\in X}(z-x)^tH(z-x)
\end{equation*}
and $\rho^2(X) = \tfrac 14 \lambda^2$. 

\begin{theorem}\label{thm:record}
There is a $2$-periodic non-lattice point set $X\subseteq\R^5$ with
$\gamma(X)<\gamma_5$. Take
\begin{equation*}
X=D_5^+
\end{equation*}
and $H=\diag(1,1,1,1,4/15)$. Then
\[
 \lambda^2(X)=\frac{16}{15},\qquad
 \mu^2(X)=\frac{27}{50},\qquad
 \boxed{\gamma(X)^2=\frac{81}{40}}.
\]
In particular, $\gamma(X)=9/\sqrt{40}=1.423024\ldots < \gamma_5$.
\end{theorem}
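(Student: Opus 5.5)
The proof splits into the two radii, computed separately, and then $\gamma^2=\mu^2/(\lambda^2/4)=\tfrac{27}{50}\cdot\tfrac{60}{16}=\tfrac{81}{40}$.

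\textbf{Packing.} The difference set $X-X$ equals $D_5\cup(D_5+t)\cup(D_5-t)$, and $D_5-t=D_5+t-\one\subseteq\Z^5+t$. So $\lambda^2(X)$ is the smaller of two quantities: the minimum of $z^tHz$ over $z\in D_5\setminus\{0\}$, and the minimum over $z\in D_5+t$. Write $z^tHz=\sum_{i\le4}z_i^2+\tfrac4{15}z_5^2$.

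For $z\in D_5\setminus\{0\}$, the coordinate sum of $z$ is even. If $z_5=0$, some two other coordinates are nonzero, giving value at least $2$. If $z_5\neq0$, either $|z_5|\ge2$, giving $\tfrac{16}{15}$, or $|z_5|=1$ and some $z_i\neq0$ with $i\le4$, giving $1+\tfrac4{15}=\tfrac{19}{15}$. The minimum is therefore $\tfrac{16}{15}$, attained at $2e_5$.

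For $z\in D_5+t$, every coordinate is a half-odd-integer, so the value is at least $4\cdot\tfrac14+\tfrac4{15}\cdot\tfrac14=\tfrac{16}{15}$. This bound is attained because the parity condition can be met. Hence $\lambda^2=\tfrac{16}{15}$.

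\textbf{Covering.} Here $\mu^2$ is the maximum over the vertices of the Voronoi cell of the origin in the $H$-metric. This cell is taken with respect to the whole periodic set $X$, and by symmetry the cell of $t$ is congruent to it.

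\emph{Symmetries.} I would first use the symmetry group to cut the work down. It contains the signed permutations of coordinates $1$ to $4$, the maps $x\mapsto -x$ and $x\mapsto x+t$ (which swaps the two cosets), and $x_5\mapsto -x_5$ combined with a suitable translation.

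\emph{Lower bound.} I would exhibit an explicit deep hole $c$ together with at least six points of $X$ at squared $H$-distance exactly $\tfrac{27}{50}$ from $c$ that affinely span $\R^5$. This makes $c$ a vertex of the Delaunay/Voronoi structure and gives $\mu^2\ge\tfrac{27}{50}$. The natural candidates are $c=(\alpha,\alpha,0,0,\beta)$-type points with $\beta$ chosen so that the weighted fifth coordinate balances $e_5$-type neighbours against $t$-type neighbours. I would solve the equidistance equations to pin down $c$; the denominator $50$ suggests $\beta$ is a multiple of $\tfrac15$ or $\tfrac3{10}$.

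\emph{Upper bound.} This is the main obstacle. I would compute the Voronoi cell of $0$ as the intersection of the half-spaces $\{x: x^tHv\le\tfrac12v^tHv\}$. It suffices to use the short vectors $v\in X\setminus\{0\}$: the $D_5$ vectors $\pm e_i\pm e_j$, $\pm2e_5$, $\pm e_i\pm e_5$, and the vectors $t+D_5$ with all coordinates in $\{\pm\tfrac12\}$, plus possibly some with fifth coordinate $\pm\tfrac32$ because the fifth direction is cheap. The work then has three parts:
\begin{enumerate}
\item Certify that this finite list is enough, by checking that every vertex of the resulting polytope has $x^tHx$ less than a quarter of the squared $H$-norm of the shortest excluded vector; that would justify the list of relevant vectors.
\item Enumerate the vertices up to the symmetry group. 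These orbits are what Appendix~\ref{app:vertices} would list.
\item Check that each vertex has squared $H$-norm at most $\tfrac{27}{50}$.
\end{enumerate}
Because the vertex enumeration is the error-prone part, I would organize it by the value of $x_5$, slicing the cell by the fifth coordinate where the $4$-dimensional sections are governed by $D_4$ and its half-integer cosets. I would cross-check the result against the lower-bound vertex.
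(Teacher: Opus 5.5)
Your packing computation is correct and is essentially the paper's argument. The one slip is that for $z_5=0$ a nonzero $z\in D_5$ need not have two nonzero coordinates (e.g.\ $2e_1$), but the bound $\ge2$ still holds.

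The gap is the covering radius. There you describe a procedure but supply neither of the two witnesses the proof rests on. For the lower bound you never produce the point, and your ansatz $c=(\alpha,\alpha,0,0,\beta)$ would not find it. The deep hole is $h=(11/30,11/30,1/15,0,1)$, whose third coordinate is nonzero; the vertex $(11/30,11/30,0,0,\pm1)$ of $P$, which does have your shape, only reaches $241/450<27/50$. Moreover, six sites at squared distance $27/50$ that span $\R^5$ do not give $\mu^2\ge 27/50$ unless you also show that no site of $X$ is closer. That emptiness check is the real content of the lower bound, and calling $c$ a ``deep hole'' presupposes it. The paper proves it in two steps. First, the nearest integer point to $h$ is $e_5$, which has odd coordinate sum, so $\operatorname{dist}_H(h,D_5)^2=41/150+4/15=27/50$, the second term being the cheapest parity correction. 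Second, a coordinatewise bound on the coset $D_5+t$ gives $27/50$, attained at $t$. For the upper bound you give no vertex list, so nothing is actually verified.

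Your symmetry reduction is also wrong, and using it to shrink the enumeration would make the upper bound unsound. Neither $x\mapsto -x$ nor $x\mapsto x+t$ preserves $X$: $-t\notin X$ and $t+t=\one\notin X$, because $\one\notin D_5$. Likewise, a single sign change of one of $x_1,\dots,x_4$ sends $t$ out of $X$. The stabilizer of $0$ used in the paper has order $384$: the permutations of $x_1,\dots,x_4$ together with the sign changes satisfying $\prod_{i=1}^5\varepsilon_i=1$. The isometry that swaps the two cosets is $x\mapsto t-x$.

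Consequently $V_0$ is not centrally symmetric, and inside the chamber $\mathcal C$ you cannot normalize to $z\ge0$. Vertex~28, $(1/2,13/30,2/15,2/15,-1/2)$, lies in $V_0$ with squared norm $27/50$; up to permuting coordinates it is $t-h$. Its reflection in $z$ violates the Voronoi inequality of $t$, since $15\sum_{i\le4}x_i+4z=20>16$.

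Finally, your step certifying that the list of relevant vectors is complete is unnecessary. Any finite subfamily of Voronoi inequalities cuts out a polytope containing $V_0\cap\mathcal C$. So it suffices that this polytope's largest vertex norm matches the lower bound. That is how the paper gets by with only eight Voronoi inequalities plus the chamber walls, and $41$ vertices.
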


\begin{remark}
The corresponding Euclidean point set can thus be obtained as $H^{1/2}D_5^+$.
\end{remark}

\section{Packing radius}\label{sec:packing}
A nonzero vector of $D_5$ either has only its fifth coordinate nonzero, in
which case that coordinate is an even integer, or has a nonzero coordinate
among the first four. These cases yield
\[
 \min_{0\ne z\in D_5}z^THz=\min\{4\tau,1+\tau,2\},\qquad \tau=4/15.
\]
An odd coordinate among the first four must be accompanied by another odd
coordinate, either among the first four or in the fifth position. A nonzero
even coordinate among the first four contributes at least $4$ to the squared
norm. These observations prove the lower bound; $2e_5$, $e_1+e_5$, and
$e_1+e_2$ attain the three values.

Every difference between the two cosets has all five coordinates half-integral. Its norm is at least $1+\tau/4$, attained by $t$. Consequently
\[
 \lambda^2(X)=\min\{4\tau,1+\tau,2,1+\tau/4\}=16/15.
\]

\section{Covering radius}\label{sec:covering}
Translations by $L$ preserve $X$, and $x\mapsto t-x$ exchanges its two cosets isometrically. Thus all Voronoi cells are congruent. It suffices to bound the Voronoi cell $V_0$ of the origin.

Permute the first four coordinates and apply sign changes $x_i \mapsto \varepsilon_ix_i$, 
where $\varepsilon_i \in \{\pm 1\}$ and $\prod_{i=1}^5 \varepsilon_i =1$. These
transformations preserve $X$ and give a subgroup of order $4!\,2^4=384$. Any
point can therefore be moved into the region
\[
 \mathcal C=\{(x_1,x_2,x_3,x_4,z):x_1\ge x_2\ge x_3\ge x_4\ge0\};
\]
$z$ is allowed either sign. Every such transformation preserves the squared norm.

The Voronoi inequality associated with a point $v\ne0$ is
\[
 2v^THx\le v^THv.
\]
Use the eight points 
\begin{gather*}
 -2e_5,\quad 2e_5,\quad
 \tfrac12(1,1,1,-1,-1),\quad
 \tfrac12(1,1,1,1,-3),\\
 \tfrac12(1,1,1,1,1),\quad e_1-e_5,\quad e_1+e_5,\quad e_1+e_2.
\end{gather*}
They all belong to $X$. The corresponding Voronoi inequalities, together with
the inequalities defining $\mathcal C$, define the following rational polytope
$P$ containing $V_0\cap\mathcal C$:
\begin{align}
 -1\le z&\le1,\label{eq:p1}\\
 15(x_1+x_2+x_3-x_4)-4z&\le16,\\
 5(x_1+x_2+x_3+x_4)-4z&\le8,\\
 15(x_1+x_2+x_3+x_4)+4z&\le16,\\
 30x_1-8z&\le19,\\
 30x_1+8z&\le19,\\
 x_1+x_2&\le1,\\
 x_1\ge x_2\ge x_3\ge x_4&\ge0.\label{eq:p12}
\end{align}
The inequalities give twelve halfspaces. The polytope is bounded: $|z|\le1$ and $0\le x_i\le1$.

\begin{lemma}\label{lem:finite}
The polytope $P$ has the 41 vertices listed in Appendix~\ref{app:vertices}. Every listed vertex has squared norm at most $27/50$.
\end{lemma}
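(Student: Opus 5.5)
This is a finite computation, and I would organize it as a vertex enumeration followed by a check of each vertex. \textbf{Enumeration.} A vertex of $P$ is a point of $P$ at which at least five linearly independent constraints from \eqref{eq:p1}--\eqref{eq:p12} are tight. I would not attempt all $\binom{12}{5}=792$ subsets blindly. Instead I would split according to the chain $x_1\ge x_2\ge x_3\ge x_4\ge 0$: the tight constraints among these four determine a pattern such as $x_1=x_2=x_3>x_4=0$. Each pattern reduces the problem to a lower-dimensional polytope in the free coordinates, say $(a,b,z)$ with $x=(a,a,a,b)$ or similar.

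The pattern also fixes the sums that enter the inequalities. For example, $x_1+x_2+x_3-x_4=3a-b$, $x_1+x_2+x_3+x_4=3a+b$, $x_1=a$, and $x_1+x_2=2a$. In each reduced polytope, which has dimension at most $5$ and usually $2$ or $3$, I would enumerate the vertices by hand from the remaining constraints in \eqref{eq:p1}--\eqref{eq:p12}, working with exact rationals. I would then check feasibility against all twelve inequalities, remove duplicates, and confirm that exactly the 41 points of Appendix~\ref{app:vertices} appear.

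\textbf{Completeness, the main obstacle.} The delicate step is showing that no vertex is missed. A more reliable route than case enumeration is a local certificate that uses the list itself. For each listed vertex $w$ one would (i) verify that $w\in P$ and that the constraints tight at $w$ have rank $5$; then (ii) for each edge direction leaving $w$, obtained by dropping one tight constraint from a rank-5 basis (or by handling degenerate vertices via the tight set), compute the first constraint hit along that ray and verify that the endpoint is again a listed vertex.

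Because $P$ is bounded, its vertex-edge graph is connected. So a list that is closed under this edge-walking and contains one genuine vertex, for instance one with $z=\pm1$, is the full vertex set; this is the standard reverse-search or double-description argument. The degenerate vertices are where I expect most of the care to go. At such a vertex, with more than five tight constraints, one must enumerate all the extreme rays of the tangent cone rather than just the bases. Alternatively, the computation can be delegated to the polyhedral software \cite{polyhedral} run in exact rational arithmetic, and the resulting list compared with the appendix.

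\textbf{Norm bound.} For each of the 41 vertices $w=(x_1,x_2,x_3,x_4,z)$ I would compute
\[
x_1^2+x_2^2+x_3^2+x_4^2+\tfrac{4}{15}z^2
\]
exactly and check that it is at most $27/50$. This is a direct rational computation. I would also record which vertices attain $27/50$ as a consistency check, since equality should occur at the deep holes realizing $\mu^2(X)$. A sample check is $z=\pm1$ with $x=(\tfrac{11}{30},\dots)$, where $30x_1\mp 8z\le19$ is tight.

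Although it is not part of the lemma, convexity of $w\mapsto w^THw$ then gives the upper bound $27/50$ on all of $P\supseteq V_0\cap\mathcal C$.
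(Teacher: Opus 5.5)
Your proposal is correct. Its enumeration step is the paper's computation reorganized, but its completeness argument takes a different route.

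\textbf{The paper's route.} The paper runs through all $\binom{12}{5}=792$ five-element subsets of the constraints, solves the 455 nonsingular systems, and keeps the feasible solutions. Completeness is then automatic, because every vertex is the unique solution of some nonsingular tight subsystem. Degenerate vertices simply appear several times, which is why the paper speaks of ``distinct'' points.

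\textbf{Your pattern split.} Splitting by the tight pattern of $x_1\ge x_2\ge x_3\ge x_4\ge0$ is the same computation. Partitioning the subsets by how many chain constraints they contain gives $\sum_k\binom4k\binom8{5-k}=792$ reduced systems. So it saves no work, and if done exhaustively it has no completeness issue beyond arithmetic slips. The ``delicate step'' you single out only arises for an unsystematic hand search.

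\textbf{Your edge-walking certificate.} This is the genuinely different part. It verifies a given list rather than discovering one. It needs one extra fact, the connectivity of the graph of a polytope. In exchange, the 792-case sweep becomes a local check at each of the 41 listed points: membership, rank of the tight set, and a ratio test along each edge.

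The price is the care you flag at degenerate vertices, and $P$ really has one. Vertex 20, $(11/30,11/30,1/15,0,-1)$, has six tight constraints:
\[
z\ge-1,\quad 15(x_1+x_2+x_3-x_4)-4z\le16,\quad 5(x_1+x_2+x_3+x_4)-4z\le8,
\]
\[
30x_1-8z\le19,\quad x_1\ge x_2,\quad x_4\ge0 .
\]
Among these, the second and third differ by a combination of the normals of $z\ge-1$ and $x_4\ge0$. There you must enumerate the extreme rays of the tangent cone. Equivalently, take the candidate directions from all bases inside the six-element tight set and discard any that violate one of the six.

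\textbf{A minor slip.} At $(11/30,\dots,\pm1)$ the tight constraint is $30x_1\pm8z\le19$, not $30x_1\mp8z\le19$.
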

\begin{proof}
Every vertex of a full-dimensional polytope in five dimensions has five linearly independent active defining inequalities. There are $\binom{12}{5}=792$ possible five-element sets. Exactly 455 have nonsingular coefficient matrices. Solving each nonsingular system and keeping exactly the solutions satisfying all twelve inequalities produces the 41 listed distinct points. Their squared norms are displayed in the same table. The supplied \texttt{standalone\_proof.py} implements this calculation with \texttt{fractions.Fraction} and elementary elimination. The point $(4,3,2,1,0)/100$ satisfies all twelve inequalities strictly, so $P$ is full-dimensional.
\end{proof}
Since $P$ is the convex hull of its vertices and $x^THx$ is a convex function,
\[
 x^THx\le27/50\qquad(x\in P).
\]
Hence the same holds on $V_0\cap\mathcal C$, and symmetry gives it throughout $V_0$. 
This proves the upper bound $\mu^2(X)\le27/50$.

Take
\[
 h=(11/30,11/30,1/15,0,1).
\]
The nearest unconstrained integer point is $(0,0,0,0,1)$, whose parity is odd. Its squared distance is $41/150$. Enforcing even parity costs at least $4/15$: changing either of the first two coordinates from $0$ to $1$, or the fifth coordinate from $1$ to $0$ or $2$, attains that increment; every other single-coordinate parity correction is more expensive. Multiple changes cannot have smaller total cost. Thus
\[
 \operatorname{dist}_H(h,D_5)^2=41/150+4/15=27/50.
\]
For the half-integer coset, the coordinatewise lower bound is
\[
 2(2/15)^2+(13/30)^2+(1/2)^2+\frac4{15}(1/2)^2=27/50.
\]
It is attained at $t$, which lies in the required coset. Therefore the squared distance to the entire set is exactly $27/50$. The six closest sites are
\[
 0,\quad 2e_5,\quad e_1+e_5,\quad e_2+e_5,\quad
 \tfrac12(1,1,1,-1,3),\quad \tfrac12(1,1,1,1,1).
\]
They span a five-dimensional simplex. This establishes the matching lower bound and completes the radius proof. Combining it with the minimum distance gives
\[
 \gamma^2=4\frac{27/50}{16/15}=81/40.
\]
The standalone checker additionally enumerates all sites that could lie in this sphere, using coordinate bounds, and finds exactly these six.

\section{How the configuration was found}\label{sec:found}

A double-precision basin-hopping search over $m$-periodic sets, seeded from the
iso-Delaunay decomposition, produced an initial periodic point set beating
$\Ho_5$.
Minimizing $\gamma$ gives a
catalog of locally optimal lattices. Index-$2$
splits of those lattices then serve as seeds: such a seed is a periodic
presentation of a lattice and so starts exactly at that lattice's value of
$\gamma$, and the walk descends from there.

The initial record came from a
\emph{moderately suboptimal} parent, the optimum of one iso-Delaunay domain
with $\gamma=1.49456$, some $3.11\%$ above $\gamma_5$. 
The double-precision optimum was then refined.

In an effort to search for a periodic \emph{covering} near that Delaunay
tessellation, we found the Delaunay tessellation of the configuration in
Theorem~\ref{thm:record}.
The covering density was $\approx 2.1600608$ and therefore worse than the known lattice optimum.
But the solution had $\gamma \approx 1.43540$.

ChatGPT's analysis of the Delaunay tessellation suggested an interpretation
with underlying lattice $D_5$. Optimizing the packing-covering constant of a
resulting three-parameter family yielded the configuration in
Theorem~\ref{thm:record}.

\section*{Tool and computational resource disclosure}

This disclosure follows the Leiden Declaration on Artificial Intelligence and
Mathematics \cite{Leiden}.

\paragraph{Tools used.}
The search used a double-precision optimizer in
C\raisebox{0.2ex}{\scriptsize ++}, part of the \texttt{polyhedral} package
\cite{polyhedral}, using \texttt{libqhull} for the Delaunay tessellations.
An earlier candidate was refined to several hundred digits of precision,
and its minimal polynomials were identified using Python/\texttt{mpmath}.
An artificial-intelligence assistant
(Anthropic's Claude, used as an autonomous programming and computation agent)
wrote those search and verification programs, ran the search campaigns,
performed the high-precision refinement and the algebraic identification, and
produced a first draft of this text.

The later covering optimization and exploration of adjacent tessellations were
performed using Julia and \texttt{MOSEK}. ChatGPT assisted in recognizing the
$D_5$ description of the resulting Delaunay tessellation and in reviewing the
exact certificate and exposition.
Numerical computations served to discover the point set; the result stated here rests
on the exact proof in this paper. The finite vertex enumeration uses only Python's
standard library.

\paragraph{Responsibility for correctness.}
Responsibility for the correctness and adequacy of the arguments and results,
and for the completeness and accuracy of the citations, rests with the human
authors, who designed the investigation, directed the strategy and checked the
arguments. No automated system is an author of this paper.

\paragraph{Attribution.}
Automated tools attribute ideas poorly, and the literature was therefore
searched by hand for prior occurrences of the configurations reported here.
We are not aware of any earlier appearance of these configurations, but cannot
exclude one; readers who know of prior work are asked to bring it to our attention.

\appendix
\section{Complete vertex certificate}\label{app:vertices}

The last column is $x_1^2+x_2^2+x_3^2+x_4^2+(4/15)z^2$. Checking that each is at most $27/50$ is a rational comparison. The enumeration criterion in Lemma~\ref{lem:finite} proves that the list is complete.
\small
\renewcommand{\arraystretch}{1.0}
\begin{longtable}{r@{\quad}l@{\qquad}l}
\toprule No. & $(x_1,x_2,x_3,x_4,z)$ & Squared norm \\ \midrule
\endfirsthead
\toprule No. & $(x_1,x_2,x_3,x_4,z)$ & Squared norm \\ \midrule
\endhead
\bottomrule\endfoot
1 & $(0,0,0,0,-1)$ & $4/15$ \\
2 & $(0,0,0,0,1)$ & $4/15$ \\
3 & $(1/5,1/5,1/5,1/5,-1)$ & $32/75$ \\
4 & $(1/5,1/5,1/5,1/5,1)$ & $32/75$ \\
5 & $(4/15,4/15,4/15,0,-1)$ & $12/25$ \\
6 & $(4/15,4/15,4/15,0,1)$ & $12/25$ \\
7 & $(3/10,3/10,3/10,3/10,-1/2)$ & $32/75$ \\
8 & $(16/45,16/45,16/45,0,0)$ & $256/675$ \\
9 & $(16/45,16/45,16/45,2/15,-1/2)$ & $313/675$ \\
10 & $(11/30,0,0,0,-1)$ & $361/900$ \\
11 & $(11/30,0,0,0,1)$ & $361/900$ \\
12 & $(11/30,13/90,13/90,13/90,-1)$ & $313/675$ \\
13 & $(11/30,13/90,13/90,13/90,1)$ & $313/675$ \\
14 & $(11/30,13/60,13/60,0,-1)$ & $99/200$ \\
15 & $(11/30,13/60,13/60,0,1)$ & $99/200$ \\
16 & $(11/30,11/30,0,0,-1)$ & $241/450$ \\
17 & $(11/30,11/30,0,0,1)$ & $241/450$ \\
18 & $(11/30,11/30,1/30,1/30,-1)$ & $121/225$ \\
19 & $(11/30,11/30,1/30,1/30,1)$ & $121/225$ \\
20 & $(11/30,11/30,1/15,0,-1)$ & $27/50$ \\
21 & $(11/30,11/30,1/15,0,1)$ & $27/50$ \\
22 & $(13/30,13/30,0,0,-3/4)$ & $473/900$ \\
23 & $(13/30,13/30,0,0,3/4)$ & $473/900$ \\
24 & $(13/30,13/30,1/15,1/15,-3/4)$ & $481/900$ \\
25 & $(7/15,7/15,2/15,2/15,-1/2)$ & $121/225$ \\
26 & $(1/2,7/30,7/30,7/30,-1/2)$ & $12/25$ \\
27 & $(1/2,17/60,17/60,2/15,-1/2)$ & $99/200$ \\
28 & $(1/2,13/30,2/15,2/15,-1/2)$ & $27/50$ \\
29 & $(1/2,1/2,0,0,-1/4)$ & $31/60$ \\
30 & $(1/2,1/2,0,0,1/4)$ & $31/60$ \\
31 & $(1/2,1/2,1/15,0,0)$ & $227/450$ \\
32 & $(1/2,1/2,1/15,1/15,-1/4)$ & $473/900$ \\
33 & $(17/30,13/30,0,0,-1/4)$ & $473/900$ \\
34 & $(17/30,13/30,0,0,1/4)$ & $473/900$ \\
35 & $(17/30,13/30,1/15,1/15,-1/4)$ & $481/900$ \\
36 & $(19/30,0,0,0,0)$ & $361/900$ \\
37 & $(19/30,13/90,13/90,13/90,0)$ & $313/675$ \\
38 & $(19/30,13/60,13/60,0,0)$ & $99/200$ \\
39 & $(19/30,11/30,0,0,0)$ & $241/450$ \\
40 & $(19/30,11/30,1/30,1/30,0)$ & $121/225$ \\
41 & $(19/30,11/30,1/15,0,0)$ & $27/50$ \\
\end{longtable}

\end{document}